\newcommand{\custMR}[1]{\href{http://www.ams.org/mathscinet-getitem?mr=#1}{MR#1}}
\newcommand{\arxiv}[1]{\href{http://arxiv.org/abs/#1}{arXiv:#1}}
\newcommand{\brax}[1]{\left( #1 \right)}
\newcommand{\set}[1]{\left\{#1\right\}}
\renewcommand{\leq}{\leqslant}
\renewcommand{\le}{\leqslant}
\renewcommand{\geq}{\geqslant}
\newcommand{\N}{\mathbb{N}}
\newcommand{\ex}{\mathrm{ex}}
\newcommand{\Mod}[1]{\ (\text{mod}\ #1)}
\renewcommand{\Mod}[1]{{\ifmmode\text{\rm\ (mod~$#1$)}\else\discretionary{}{}{\hbox{ }}\rm(mod~$#1$)\fi}}
\newcommand{\allnotes}[1]{}
\renewcommand{\allnotes}[1]{\textit{#1}}
\newtheorem{theorem}{Theorem}[section]
\theoremstyle{definition} 
\numberwithin{theorem}{section}
\begin{document}

\title{The maximum number of triangles in $K_{1,s,t}$-free graphs}

\author{Asier Calbet}
\address{Department of Mathematics and Mathematical Statistics \\ Umeå University \\ Umeå \\ Sweden}
\email{asier.calbet@umu.se}

\author{Ritesh Goenka}
\address{Mathematical Institue \\ University of Oxford \\ Oxford, UK}
\email{ritesh.goenka@maths.ox.ac.uk}

\subjclass[2020]{05C35}
\keywords{Generalized Tur\'an problems, wheel-free graphs, triangle removal lemma.}

\begin{abstract}
    We consider the following generalized Tur\'an problem: For $2 \le s \le t$, what is the maximum number of triangles in a $K_{1,s,t}$-free graph on $n$ vertices? The previously best known lower and upper bounds are $\Omega(n^2)$ and $o(n^{3-1/s})$, respectively. To the best of our knowledge, all known proofs of the upper bound use the triangle removal lemma. We give a new elementary proof that avoids the use of the triangle removal lemma and improves the upper bound to $O\brax{n^{3-1/s}(\log n)^{-1+1/s}}$.
\end{abstract}

\maketitle

\section{Introduction}
\label{sec:intro}

Tur\'an problems are among the oldest and most well-studied questions in extremal combinatorics. In a seminal paper, Alon and Shikhelman~\cite{AS} initiated the study of the \emph{generalized Tur\'an number} $\ex(n, T, H)$, which is defined as the maximum number of copies of $T$ in an $H$-free graph on $n$ vertices. The classical Tur\'an numbers correspond to the case when $T$ is an edge. Several results about generalized Tur\'an numbers were already known long before Alon and Shikhelman~\cite{AS} defined them formally. See \cite{GP} for such results and a comprehensive survey of generalized Tur\'an problems.

A simple and natural case to consider is $T = K_3$. Alon and Shikhelman  determined $\ex(n, K_3, H)$ asymptotically when $\chi(H) \geq 4$~\cite[Proposition~2.2]{AS}. They also obtained several estimates when $\chi(H) = 2$ (see \cite[Section~4]{AS}). We consider the case when $\chi(H) = 3$. Mubayi and Mukherjee~\cite{MM} considered the case when $H$ is the suspension of a bipartite graph, meaning $H$ can be obtained by adding a new vertex incident to all vertices in a bipartite graph. Among other results, they showed that $\ex(n, K_3, K_{1,s,t})= o(n^{3-1/s})$ for fixed $1 \le s \le t$ \cite[Theorem~1.1]{MM}. Moreover, they proved $\ex(n, K_3, \widehat{C}_{2k})= o(n^{2+1/k})$ for fixed $k \ge 2$, where $\widehat{H}$ denotes the suspension of a graph $H$ \cite[Theorem~1.2]{MM}. Generalizing their results, Methuku~\cite[Theorem~6.1]{MM} showed that for any bipartite graph $H$ with $\ex(n,H) = O(n^\alpha)$ for some $1 < \alpha < 2$, we have $\ex(n, K_3, \widehat{H}) = o(n^{1+\alpha})$. The proofs use the triangle removal lemma \cite{RS}. Note that the bound $\ex(n, K_3, \widehat{H}) = O(n^{1+\alpha})$ is trivial. Indeed, in an $\widehat{H}$-free graph, the neighborhood of any vertex is $H$-free, so contains $O(n^{\alpha})$ edges. Equivalently, every vertex is in  $O(n^{\alpha})$ triangles, so summing over all vertices gives the result.

The first result of Mubayi and Mukherjee~\cite{MM} was generalized by Balogh, Jiang, and Luo~\cite{BJL}. They showed that $\ex(n, K_m, K_{s_1, s_2, \dots, s_m}) = o\brax{n^{m-1/\prod_{i=1}^{m-1} s_i}}$ for fixed positive integers $m \ge 3$ and $s_1 \le s_2 \le \dots \le s_m$ \cite[Theorem~1]{BJL}. In particular, their results imply $\ex(n, K_3, K_{r,s,t}) = o(n^{3-1/(rs)})$ for fixed $1 \le r \le s \le t$. Their proof uses the graph removal lemma. Basu, R\"{o}dl, and Zhao~\cite[Theorem~2]{BRZ} further generalized their result to hypergraphs using the hypergraph removal lemma. Note that the bound $\ex(n, K_m, K_{s_1, s_2, \dots, s_m}) = O\brax{n^{m-1/\prod_{i=1}^{m-1} s_i}}$ is again trivial. Indeed, given a $K_{s_1, s_2, \dots, s_m}$-free graph, one can construct a $K_{s_1, s_2, \dots, s_m}^{(m)}$-free $m$-uniform hypergraph (where $K_{s_1, s_2, \dots, s_m}^{(m)}$ is the complete $m$-uniform $m$-partite hypergraph with parts of size $s_1$,$s_2$,$\dots$,$s_m$) on the same vertex set by taking the edges to be the copies of $K_m$, so by the standard bound on the hypergraph Tur\'an number~\cite{Erd2}, we have $\ex(n, K_m, K_{s_1, s_2, \dots, s_m}) \leq \ex(n, K_{s_1, s_2, \dots, s_m}^{(m)}) = O\brax{n^{m-1/\prod_{i=1}^{m-1} s_i}}$.

We now restrict our attention to $\ex(n, K_3, K_{1,s,t})$ for fixed $1 \le s \le t$. The case $s = 1$ and $t = 2$ is easily seen to be equivalent to the $(6, 3)$-problem of Ruzsa and Szemer\'edi~\cite{RS}. As stated in \cite{BJL}, the best known bounds in this case are
\begin{equation*}
    n^2 e^{-O(\sqrt{\log n})} \le \ex(n, K_3, K_{1,1,2}) \le o(n^2),
\end{equation*}
where the lower bound comes from Behrend's construction~\cite{Beh} and the upper bound follows from the triangle removal lemma. For $t \geq 3$, we still have the same lower and upper bounds for $\ex(n, K_3, K_{1,1,t})$. In the case $s \ge 2$, we prove the following theorem, which improves upon the previously best known upper bound for $\ex(n, K_3, K_{1,s,t})$.

\begin{theorem}
\label{thm:main}
    Let $s, t \in \N$ with $2 \le s \le t$. Then $\ex(n,K_3,K_{1,s,t}) = O\brax{n^{3-1/s}(\log n)^{-1+1/s}}$, where the implicit constant depends on $s$ and $t$.    
\end{theorem}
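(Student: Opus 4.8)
The plan is to start from the exact identity $3\,\tau(G)=\sum_{v\in V(G)}e(G[N(v)])$, where $\tau(G)$ is the number of triangles and $N(v)$ is the neighbourhood of $v$. Since $G$ is $K_{1,s,t}$-free, every $G[N(v)]$ is $K_{s,t}$-free, so Kővári–Sós–Turán gives $e(G[N(v)])=O_{s,t}\!\big(d(v)^{2-1/s}\big)$, and summing while using $d(v)<n$ recovers the known bound $O(n^{3-1/s})$. To gain the factor $(\log n)^{-(1-1/s)}$ one cannot treat the neighbourhoods in isolation: the per-neighbourhood estimate is already essentially sharp for a single $v$, and the slack lies entirely in the impossibility of \emph{all} the $G[N(v)]$ being near-extremal $K_{s,t}$-free graphs simultaneously.

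Concretely, I would (i) split $V(G)$ dyadically by degree, $V_j=\{v:2^{j}\le d(v)<2^{j+1}\}$, and charge each triangle to its lowest-degree vertex; a triangle charged to $v\in V_j$ lives, inside the $K_{s,t}$-free graph $G[N(v)]$, among the $\le\min\{d(v),\,m_{\ge j}\}$ neighbours of $v$ of degree $\ge 2^{j}$, where $m_{\ge j}=|\{u:d(u)\ge 2^{j}\}|\le 2e(G)/2^{j}$, so Kővári–Sós–Turán applied to that induced subgraph bounds the triangles charged to $v$ by $O_{s,t}\!\big(\min\{2^{j},m_{\ge j}\}^{2-1/s}\big)$; (ii) thereby reduce to the degree-sequence estimate $\tau(G)=O_{s,t}\!\big(\sum_{j}|V_j|\min\{2^{j},m_{\ge j}\}^{2-1/s}\big)$ and show its right-hand side is $O\!\big(n^{3-1/s}(\log n)^{-(1-1/s)}\big)$; and (iii) supply the structural input that (ii) needs, namely that a $K_{1,s,t}$-free graph with close to the extremal number of triangles cannot have a "flat" degree sequence, using two elementary consequences of $K_{1,s,t}$-freeness beyond the per-neighbourhood one: the counting inequality $\sum_{uv\in E(G)}\binom{c(u,v)}{s}\le\frac{t-1}{2}\sum_{v}\binom{d(v)}{s}$ (with $c(u,v)$ the codegree), obtained by summing the Kővári–Sós–Turán $s$-star count over all neighbourhoods, and the nesting property that the common neighbourhood of any $q$-clique with $q\le s$ induces a $K_{s-q+1,t}$-free subgraph.

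The main obstacle is step (iii). Every bootstrapping inequality one extracts by pure double counting — for instance $\tau(G)=O_{s,t}\!\big(e(G)\,n^{1-1/s}\big)$, which follows from the codegree inequality above by convexity together with $\sum_v d(v)^s\le 2e(G)\,n^{s-1}$ — is tight up to constants for the hypothetical "flat, near-extremal" $G$ (roughly regular of degree $\Theta(n)$, with every $G[N(v)]$ an extremal $K_{s,t}$-free graph and all codegrees of order $n^{1-1/s}$), so none of them alone can produce a logarithm. The logarithmic saving has to come from showing that this configuration is self-contradictory already a $(\log n)$-factor below $n^{3-1/s}$; I expect this to need a stability/supersaturation argument \emph{inside} one such neighbourhood $G[N(v)]$ — which is again $K_{s,t}$-free, so the whole scheme can be iterated there — rather than the triangle removal lemma, and it is the quantitative form of this step that should pin down the exponent $-(1-1/s)$ on the logarithm.
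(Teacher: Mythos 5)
There is a genuine gap, and it sits exactly where you flag it: step (iii) is not an implementation detail but the entire content of the theorem, and your proposal leaves it unproved. Steps (i)--(ii) by themselves cannot give any improvement over the trivial $O(n^{3-1/s})$: for a near-regular graph of degree $\Theta(n)$ all vertices fall into a single dyadic class with $2^{j}\asymp m_{\ge j}\asymp n$, so the right-hand side of your reduction (ii) is $\Theta(n\cdot n^{2-1/s})=\Theta(n^{3-1/s})$, with no logarithm. You correctly observe that every double-counting inequality you can extract is tight for the hypothetical flat near-extremal configuration, but the conclusion to draw from that observation is that a degree-sequence argument is the wrong axis of attack, not that one should hope for an unspecified ``stability/supersaturation argument inside a neighbourhood.'' No such argument is supplied, and it is not clear one exists along these lines: iterating the scheme inside $G[N(v)]$ runs into the same issue one level down, since the obstruction is not degree irregularity but the possibility that all neighbourhoods are simultaneously near-extremal $K_{s,t}$-free graphs. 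As written, the proposal proves only the known bound $O(n^{3-1/s})$.

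For comparison, the actual source of the logarithmic saving in the paper is orthogonal to degrees. One partitions $V$ into blocks $P$ of size at most $\log n/\log 8$, and for each block classifies the vertices of $V$ by their neighbourhood trace $S=\Gamma(v)\cap P$, obtaining sets $V_S$ that are completely joined to $S$. Since $G$ is $K_{1,s,t}$-free and $S$--$V_S$ is a complete bipartite pattern, every vertex of $V$ has either fewer than $s$ neighbours in $S$ or fewer than $t$ neighbours in $V_S$; the second type contributes only $O(n\log n)$ ordered triangles, and for the first type one applies K\H{o}v\'ari--S\'os--Tur\'an to the bipartite graphs between $V_S$ and $\Gamma(v)\cap A$ for $v\in S$ and then Jensen over the at most $\log n/\log 8$ vertices of $S$, with $\sum_{v\in S}|\Gamma(v)\cap A|=O(n)$. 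The factor $(\log n)^{1/s}$ arises precisely because $|S|\le |P|=O(\log n)$ replaces the factor $n^{1/s}$ that a sum over all of $V$ would produce, and the block size is tuned so that the $2^{|P|}$ choices of $S$ cost only $n^{1/3}$. None of this machinery, nor any substitute for it, appears in your proposal.
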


In their abstract, Mubayi and Mukherjee~\cite{MM} state that perhaps stronger results than theirs can be proved without using the triangle removal lemma, and indeed our proof of Theorem~\ref{thm:main} does not use the triangle removal lemma. The proof of the previously best known upper bound for $\ex(n, K_3, K_{1,s,t})$ does use the triangle removal lemma, which yields a bound of $O\brax{n^{3-1/s} e^{-\Omega(\log^* n)}}$, where $\log^*$ is the iterated logarithm function (see \cite{Fox}). Theorem~\ref{thm:main} provides a polylogarithmic improvement to this bound.

We mention the special case $\ex(n, K_3, K_{1,2,2})$ for which the previously best known bounds were
\begin{equation}
\label{eqn:K122}
    \Omega(n^2) \le \ex(n, K_3, K_{1,2,2}) \le o(n^{5/2}),
\end{equation}
where the lower bound is obtained by taking a complete bipartite graph with the sizes of the parts as equal as possible and adding a matching inside one of the parts (see problem 2 in \cite{Erd} for a related conjecture of Gallai, which was later disproved by F\"uredi, Goemans, and Kleitman~\cite{FGK}) and the upper bound follows from the result of Mubayi and Mukherjee~\cite[Theorem~1.1]{MM}. The upper bound was also proved independently by Methuku, Gr\'osz, and Tompkins in an unpublished work (see \cite{MM}) and Balogh (see his lecture note \cite{Bal}). Reducing the gap between the bounds in \eqref{eqn:K122} has been raised as an open problem in several previous works~\cite{BJL,MM,MV}. Theorem~\ref{thm:main} provides a polylogarithmic improvement to the upper bound; however, the polynomial gap remains.

\section{Proof}
\label{sec:proof}

We begin by recalling the K\H{o}v\'ari-S\'os-Tur\'an theorem~\cite{KST}, which shall be useful in proving our main result. Let $z(m, n; s, t)$ denote the maximum number of edges in a $K_{s,t}$-free bipartite graph $G$ with $m$ vertices on one side and $n$ vertices on the other side. The Zarankiewicz problem~\cite{Zar} asks for an estimate on $z(m, n; s, t)$. K\H{o}v\'ari, S\'os, and Tur\'an~\cite{KST} proved the following theorem, which provides an upper bound on $z(m, n; s, t)$.

\begin{theorem}[K\H{o}v\'ari-S\'os-Tur\'an]
\label{thm:KST}
   For positive integers $s \le t$, we have $z(m, n; s, t) \le K(n^{1-1/s} m + n)$, where $K > 0$ is a constant depending only on $s$ and $t$.
\end{theorem}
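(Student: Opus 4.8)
The plan is to prove this by the classical double-counting of ``$s$-stars'', following K\H{o}v\'ari, S\'os, and Tur\'an. Let $G$ be a bipartite graph with parts $A$ and $B$, $|A|=m$, $|B|=n$, containing no copy of $K_{s,t}$, and write $e:=e(G)$. I would estimate in two ways the size of the set $\mathcal{N}$ of pairs $(S,b)$ with $S\subseteq A$, $|S|=s$, $b\in B$, and $b$ adjacent to every vertex of $S$.

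For the upper bound, fix an $s$-element set $S\subseteq A$. If $t$ distinct vertices of $B$ were each adjacent to all of $S$, then together with $S$ they would span a copy of $K_{s,t}$ in $G$; hence at most $t-1$ vertices of $B$ are adjacent to all of $S$. Summing over the $\binom{m}{s}$ choices of $S$ gives $|\mathcal{N}|\le (t-1)\binom{m}{s}$.

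For the lower bound, summing instead over $b\in B$, a vertex of degree $d(b)$ lies in exactly $\binom{d(b)}{s}$ pairs of $\mathcal{N}$, so $|\mathcal{N}|=\sum_{b\in B}\binom{d(b)}{s}$. Since $x\mapsto\binom{x}{s}$ is convex and $\sum_b d(b)=e$, Jensen's inequality gives $|\mathcal{N}|\ge n\binom{e/n}{s}$ once the trivial case $e/n<s-1$ (in which $e<(s-1)n$ and the asserted bound already holds) has been set aside. Comparing the two estimates,
\[ n\binom{e/n}{s}\le (t-1)\binom{m}{s}\le \frac{t-1}{s!}\,m^s, \]
and using $\binom{e/n}{s}\ge \frac{1}{s!}\brax{\frac{e}{n}-s+1}^s$ one computes $\brax{\frac{e}{n}-s+1}^s\le \frac{(t-1)m^s}{n}$, hence $\frac{e}{n}\le (t-1)^{1/s}\,\frac{m}{n^{1/s}}+(s-1)$, i.e.\ $e\le (t-1)^{1/s}n^{1-1/s}m+(s-1)n$. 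Taking $K=\max\{(t-1)^{1/s},\,s-1\}$ finishes the proof.

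There is no genuinely hard step here; the only point that needs a little care is the convexity/Jensen step, since the summands $\binom{d(b)}{s}$ vanish for $d(b)<s$ whereas the real-variable polynomial may dip below zero on $(0,s-1)$. I would handle this in the standard way, either by replacing $\binom{x}{s}$ with the function equal to it for $x\ge s-1$ and equal to $0$ for $x\le s-1$ (which is convex and agrees with $\binom{d(b)}{s}$ at every nonnegative integer), or simply by treating the small-average-degree case separately as above.
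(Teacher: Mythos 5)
Your proof is correct: it is the classical double-counting argument of K\H{o}v\'ari, S\'os, and Tur\'an, carried out with the $s$-sets on the side of size $m$ and the convexity step handled properly (the convex extension of $x \mapsto \binom{x}{s}$, or equivalently setting aside the case $e < (s-1)n$), and it yields exactly the stated bound $e \le (t-1)^{1/s} n^{1-1/s} m + (s-1)n$. The paper itself does not reprove this theorem but simply quotes it from [KST], whose proof is the one you give.
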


\begin{proof}[Proof of Theorem~\ref{thm:main}]
    Let $G$ be a $K_{1,s,t}$-free graph on $n$ vertices. We need to show that $G$ has $O\brax{n^{3-1/s} (\log n)^{-1+1/s}}$ triangles. It will be more convenient to count ordered triangles, that is, ordered triples $(a,b,c)$ of vertices where $a$ is adjacent to $b$, $b$ is adjacent to $c$, and $c$ is adjacent to $a$. Let $V$ be the vertex set of $G$ and for subsets $A,B,C \subseteq V$, let $\Delta(A,B,C)$ be the number of ordered triangles $(a,b,c)$ with $a \in A$, $b \in B$ and $c \in C$. Then $\Delta(V,V,V)$ is six times the number of triangles in $G$, so we need to show that $\Delta(V,V,V) = O\brax{n^{3-1/s} (\log n)^{-1+1/s}}$.

    Let $\mathcal{P}$ be a partition of $V$ into $O(n/\log n)$ parts with $|P| \leq \log n / \log 8$ for each $P \in \mathcal{P}$. Then $\Delta(V,V,V) = \sum_{P \in \mathcal{P}} \Delta(P,V,V)$, so it suffices to show that $\Delta(P,V,V) = O(n^{2-1/s} (\log n)^{1/s})$ for each $P \in \mathcal{P}$. Let $P$ be such a part and for each subset $S \subseteq P$, let $V_S = \set{v \in V \,\colon\, \Gamma(v) \cap P = S}$, where $\Gamma(v)$ denotes the neighborhood of the vertex $v$. Note that the sets $V_S$ partition $V$. We will show that $\Delta(P, V_S, V) = O\brax{n^{1-1/s} (\log n)^{1/s} |V_S| + n \log n}$ for every subset $S \subseteq P$. It then follows that
    \begin{align*}
        \Delta(P,V,V) &= \sum_{S \subseteq P} \Delta(P, V_S, V)\\
        &= \sum_{S \subseteq P} O\brax{n^{1-1/s} (\log n)^{1/s} |V_S| + n \log n}\\
        &= O\brax{n^{2-1/s} (\log n)^{1/s} + 2^{|P|} \ n \log n},
    \end{align*}
    and hence $\Delta(P,V,V) = O(n^{2-1/s} (\log n)^{1/s})$, as desired, since $2^{|P|} \leq n^{1/3}$.

    Let $S \subseteq P$. We need to show that $\Delta(P,V_S,V) = O\brax{n^{1-1/s} (\log n)^{1/s} |V_S| + n \log n}$. Note that $\Delta(P,V_S,V) = \Delta(S,V_S,V)$ and that the induced bipartite subgraph between $S$ and $V_S$ is complete. Hence, either $|\Gamma(v) \cap S| < s$ or $|\Gamma(v) \cap V_S| < t$ for every vertex $v \in V$, for otherwise $G$ would contain a copy of $K_{1,s,t}$. Equivalently, we have $V = A \cup B$, where $A = \set{v \in V \,\colon\, |\Gamma(v) \cap S| < s}$ and $B = \set{v \in V \,\colon\, |\Gamma(v) \cap V_S| < t}$, so $\Delta(S,V_S,V) \leq \Delta(S,V_S,A) + \Delta(S,V_S,B)$. Since each vertex in $B$ is adjacent to at most $t$ vertices in $V_S$, we have $\Delta(S,V_S,B) \leq t|S| |B| = O(n \log n)$, so it remains to show that $\Delta(S,V_S,A) = O\brax{ n^{1-1/s} (\log n)^{1/s} |V_S| + n \log n}$.

    For each vertex $v \in S$, let $V_S'$ be a copy of $V_S$ and let $(\Gamma(v) \cap A)'$ be a copy of $\Gamma(v) \cap A$, with $V_S'$ and $(\Gamma(v) \cap A)'$ disjoint. Let $G_v$ be the bipartite graph with parts $V_S'$ and $(\Gamma(v) \cap A)'$ where the copy $u' \in V_S'$ of a vertex $u \in V_S$ is adjacent to the copy $w' \in (\Gamma(v) \cap A)'$ of a vertex $w \in \Gamma(v) \cap A$ if and only if $u$ is adjacent to $w$ in $G$. Then $G_v$ is $K_{s,t}$-free since $G$ is $K_{1,s,t}$-free, so by the K\H{o}v\'ari-S\'os- Tur\'an theorem (Theorem~\ref{thm:KST}), we have $e(G_v) = O\brax{|\Gamma(v) \cap A|^{1-1/s} |V_S| + |\Gamma(v) \cap A|}$, where $e(G_v)$ denotes the number of edges in $G_v$. Further, we have
    \begin{equation*}
        \sum_{v \in S} |\Gamma(v) \cap A| = \sum_{v \in A} |\Gamma(v) \cap S| \leq s |A| = O(n) .
    \end{equation*}
    Hence, by Jensen's Inequality for the concave function $x \mapsto x^{1-1/s}$, we have
    \begin{align*}
        \Delta(S,V_S,A) &= \sum_{v \in S} e(G_v)\\
        &= \sum_{v \in S} O\brax{|\Gamma(v) \cap A|^{1-1/s} |V_S| + |\Gamma(v) \cap A|}\\
        &= O\brax{n^{1-1/s} |S|^{1/s} |V_S| + n}\\
        &= O\brax{n^{1-1/s} (\log n)^{1/s} |V_S| + n}, 
    \end{align*}
    as required.
\end{proof}

\section*{Acknowledgments}

We are grateful to Yaobin Chen and Ji Zeng for several helpful discussions in the initial stages of the project. We also thank J\'ozsef Balogh and Peter Keevash for helpful conversations. The research leading to this note began during the Early Career Researchers in Combinatorics (ECRiC) workshop, 15-19 July 2024, funded by the International Centre for Mathematical Sciences, Edinburgh. The first author is supported by a Kempe Foundation scholarship. The second author is supported by a joint Clarendon and Exeter College SKP scholarship.

\end{document}